\newcommand{\ceil}[1]{{\lceil#1\rceil}}
\newcommand{\N}{{\mathbb N}}
\newcommand{\Z}{{\mathbb Z}}
\newcommand{\seq}{\subseteq}
\renewcommand{\int}{{\sf int}}
\newcommand{\D}{{\mathcal D}}
\renewcommand{\epsilon}{\varepsilon}
\renewcommand{\sp}{\mathrm{span}}
\newcommand{\Cayley}{\mathcal{C}}
\newcommand{\GF}{\mathrm{GF}}
\newcommand{\Forbidden}{{F}}
\newtheorem{ftheorem}{Theorem}
\newtheorem{theorem}{Theorem}[section]
\newtheorem{lemma}[theorem]{Lemma}
\newtheorem{claim}[theorem]{Claim}
\newtheorem{observation}[theorem]{Observation}
\newtheorem{notation}[theorem]{Notation}
\newtheorem{question}[theorem]{Question}
\newenvironment{proof-sketch}{\noindent{\bf Sketch of Proof}\hspace*{1em}}{\qed\bigskip}
\newenvironment{proof-idea}{\noindent{\bf Proof Idea}\hspace*{1em}}{\qed\bigskip}
\newenvironment{proof-of-lemma}[1]{\noindent{\bf Proof of Lemma #1}\hspace*{1em}}{\qed\bigskip}
\newenvironment{proof-of-claim}[1]{\noindent{\bf Proof of Claim #1}\hspace*{1em}}{\qed\bigskip}
\newenvironment{proof-of-thm}[1]{\noindent{\bf Proof of Theorem #1}\hspace*{1em}}{\qed\bigskip}
\newenvironment{proof-attempt}{\noindent{\bf Proof Attempt.}\hspace*{1em}}{\qed\bigskip}
\title{Meyniel Extremal Families of Abelian Cayley Graphs}
\author{
    Fatemeh Hasiri\thanks{
    School of Computing Science, Simon Fraser University.
    Email: \texttt{fhasiri@sfu.ca}
    }
    \and
	Igor Shinkar\thanks{
    School of Computing Science, Simon Fraser University.
    Email: \texttt{ishinkar@sfu.ca}
    }
}
\begin{document}
	
\maketitle

\begin{abstract}
    We study the game of \emph{Cops and Robbers}, where cops try to capture a robber on the vertices of a graph.
    Meyniel's conjecture states that for every connected graph $G$ on $n$ vertices, the cop number of $G$ is upper bounded by $O(\sqrt{n})$,
    i.e., that $O(\sqrt{n})$ suffice to catch the robber.
    We present several families of abelian Cayley graphs that are Meyniel extremal, i.e., graphs whose cop number is $O(\sqrt{n})$.
    This proves that the $O(\sqrt{n})$ upper bound for Cayley graphs proved by Bradshaw~\cite{Bradshaw19} is tight up to a multiplicative constant.
    In particular, this shows that Meyniel's conjecture, if true, is tight to a multiplicative constant even for abelian Cayley graphs.

    In order to prove the result, we construct Cayley graphs on $n$ vertices with $\Omega(\sqrt{n})$ generators that are $K_{2,3}$-free.
    This shows that the K\"{o}v\'{a}ri, S\'{o}s, and Tur\'{a}n theorem, stating that any $K_{2,3}$-free graph of $n$ vertices has at most $O(n^{3/2})$ edges,
    is tight up to a multiplicative constant even for abelian Cayley graphs.
\end{abstract}

%%%%%%%%%%%%%%%%%%%%%%%%%%%%%%%%%%%%%%%%%%%%%
\section{Introduction}\label{sec:intro}
%%%%%%%%%%%%%%%%%%%%%%%%%%%%%%%%%%%%%%%%%%%%%

\emph{Cops and robber} is a two player game played on an undirected, finite, simple, and connected graph $G=(V, E)$.
The first player, called the \emph{cops player}, has $c$ cops, and second player, \emph{the robber}, has $1$ robber.
The game starts with the first player placing each cop in a vertex in $G$; then, the second player chooses the initial vertex for the robber.
The players play in alternate rounds, where in each turn of the cops the first player moves each cop along an edge to an adjacent vertex or keeps it in its current position,
and in robber's turn the robber may move along an edge to an adjacent vertex or stay in place.
The cops win if after some finite number of rounds, one of the cops \emph{captures} the robber by occupying the same vertex as the robber.
Otherwise, if the robber can avoid this situation forever, the robber wins the game.
The minimum value of $c$ for which the cops have a winning strategy is called the \emph{cop number of $G$}, and is denoted by $c(G)$.
The game of cops and robbers was first introduced by Nowakowski and Winkler~\cite{Nowakowski83}, and independently by Quilliot~\cite{quilliot1983problemes} for the case of $c=1$ cop, and later generalized by Aigner and Fromme~\cite{fromme1984game} to several cops.

Meyniel's conjecture, mentioned in Frankl's paper~\cite{Frankl1987a}, states that for any connected $n$-vertex graph $G$ it holds that $c(G) = O(\sqrt{n})$.
Despite considerable attention this problem has received recently, the conjecture remain open.
The best known upper bound, proved independently by \cite{lu2012meyniel,ScottS11,FriezeKL12}, says that the cop number of any graph on $n$ vertices is upper bounded by $n/2^{(1+o(1))\sqrt{n}}$. Sharper results are known for special classes of graphs, such as random graphs~\cite{BollobasKL13,BonatoPW07a,BonatoPW07b,LuczakP10,Pralat10}, planar graphs \cite{fromme1984game}, graphs with bounded genus \cite{Quilliot85, Schroder2001}, Cayley graphs \cite{Bradshaw19,Frankl1987}, and more. For a survey of known related results see~\cite{BN09book}.

There are several works in the literature~\cite{Pralat10, BairdBonato12, BonatoBurgess13} describing Meyniel extremal families of graphs,
i.e., families of graphs whose cop number is $\Omega(\sqrt{n})$ where $n$ is the number of vertices in the graph.
Our work contributes new examples of Meyniel extremal families.
Specifically,we present several Meyniel extremal families of \emph{abelian Cayley} graphs.

Informally, abelian Cayley graphs are very structured, symmetric graphs.
More formally, let $G$ be a finite group, and let subset $S$ be a symmetric subset of $G$, i.e., satisfying the property that if $a \in S$, then $-a \in S$.
The Cayley graph associated with $(G,S)$, denoted by $\Cayley(G,S)$,
is the graph whose vertices are the elements of $G$, and there is an edge between $g$ and $h$ if and only if $g-h \in S$.
We say that a Cayley graph $\Cayley(G,S)$ is abelian if the underlying group $G$ is abelian.

Frankl~\cite{Frankl1987} proved that for any  connected abelian Cayley graphs it holds that $c(\Cayley(G,S))\leq \ceil{(|S|+1)/2}$.
Recently, Bradshaw~\cite{Bradshaw19} showed that the cop number of any connected abelian Cayley graph on $n$ vertices is bounded by $7\sqrt{n}$.
In this work we prove a lower bound that matches Bradshaw's result up to a multiplicative constant.
In particular, if Meyniel's conjecture is true, then it is tight to a multiplicative constant even for abelian Cayley graphs.

%%%%%%%%%%%%%%%%%%%%%%%%%%%%%%%%%%%%%%%%%%%%%
\subsection{Our results}\label{sec:results}
%%%%%%%%%%%%%%%%%%%%%%%%%%%%%%%%%%%%%%%%%%%%%

In this paper we present several examples of Meyniel extremal families of abelian Cayley graphs,
i.e., abelian Cayley graphs on $n$ vertices whose cop number is $\Omega(\sqrt{n})$.

\begin{ftheorem}\label{thm:sqrt-lb}
The following graph families are Meyniel extremal.
\begin{enumerate}
    \item \label{item:Cayley-Z_n}
    Let $n$ be a sufficiently large integer, and let $\Z_n$ be the additive group modulo $n$.
    There exists a set of generator $S_1 \seq \Z_n$ of size $|S_1| \geq \sqrt{n/8} - O(n^{0.2625})$
    such that the graph $\Gamma_1=\Cayley(\Z_n, S_1)$ has cop number $c(\Gamma_2) \geq |S_1|/3 \geq
    \frac{\sqrt{n}}{3\sqrt{8}} - O(n^{0.2625}) \geq 0.1178 \sqrt{n} - O(n^{0.2625})$.
    \item \label{item:Cayley-p^k-even-k}
    Let $p$ be an odd prime power, and let $k \in \N$ be a positive even integer.
    Consider the abelian group $G_2 = \Z_p^k$ of order $n = p^k$.
    There exists a set of generators $S_2 \seq \Z_p^k$ of size $|S_2| = p^{k/2}+1$
    such that the graph $\Gamma_2 = \Cayley(G_2, S_2)$ has cop number $c(\Gamma_2) \geq |S_2|/3 > \sqrt{n}/3 > 0.3333 \sqrt{n}$.
    \item \label{item:Cayley-5-p^2}
    Let $p$ be an odd prime.
    Consider the abelian group $G_3 = \Z_5 \times \Z_p \times \Z_p$ of order $n = 5p^2$.
    There exists a set of generators $S_3 \seq G_3$ of size $|S_3| = 2p$
    such that the graph $\Gamma_3 = \Cayley(G_3, S_3)$ has cop number $c(\Gamma_3) = \ceil{(|S_3|+1)/2} = p+1 > \sqrt{n/5} > 0.4472\sqrt{n}$.
\end{enumerate}
\end{ftheorem}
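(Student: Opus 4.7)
The plan is to reduce each item to one general combinatorial lemma plus a tailored Cayley-set construction. The lemma I would prove first states: if an abelian Cayley graph $\Cayley(G,S)$ is $K_{2,3}$-free, meaning that every pair of vertices shares at most two common neighbors, then $c(\Cayley(G,S)) \geq |S|/3$. The argument uses a ``safe-neighbor'' robber strategy. When the robber sits at $v$, a cop positioned at $u$ can threaten the robber's next move only if $\dist(u,v) \leq 2$; a cop at distance exactly $2$ threatens only the common neighbors of $u$ and $v$, of which there are at most $2$ by hypothesis, while a cop that is itself a neighbor of $v$ threatens at most $1$ neighbor (itself). Accounting for the robber's current vertex and a little slack from the ``cops move first'' convention, each cop eliminates at most $3$ of the $|S|+1$ possible safe vertices for the robber's next turn, so strictly fewer than $|S|/3$ cops cannot cover every escape.

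With the lemma in hand, items~(\ref{item:Cayley-Z_n}) and~(\ref{item:Cayley-p^k-even-k}) reduce to exhibiting symmetric generator sets $S$ of size $\Omega(\sqrt{|G|})$ for which the Cayley graph is $K_{2,3}$-free. For item~(\ref{item:Cayley-p^k-even-k}), I would take $S_2$ to be the point set of a nondegenerate conic inside $\Z_p^k$ viewed as $\F_{p^{k/2}} \oplus \F_{p^{k/2}}$: such a conic has exactly $p^{k/2}+1$ rational points, is invariant under $x \mapsto -x$ (conics are preserved by central inversion for $p$ odd), and is Sidon because two chords of a nondegenerate conic with the same direction must coincide. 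Sidonness in the abelian group sense forces the Cayley graph to have girth at least $5$, which is stronger than $K_{2,3}$-freeness, so the lemma applies. For item~(\ref{item:Cayley-Z_n}) I would start from a Singer / Bose-Chowla Sidon set $T$ of size $(1-o(1))\sqrt{p}$ in $\Z_p$ for a prime $p$ close to $n$, embed it into a short initial segment of $\{0,1,\dots,\lfloor n/4 \rfloor\}$, reduce modulo $n$, and symmetrize to $S_1 = T \cup (-T)$. Confining $T$ to a short enough interval guarantees that no additive collision across $T$ and $-T$ creates a third representation of any nonzero difference, keeping the Cayley graph $K_{2,3}$-free while $|S_1| \approx 2|T|$. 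The Baker-Harman-Pintz theorem, guaranteeing a prime in every interval $[x-x^{0.525},x]$, converts the error in the approximation $p \approx n$ into the claimed $O(n^{0.2625})$ term; the precise constant $1/\sqrt{8}$ emerges from balancing the interval length for the symmetrization against the size of the ambient Sidon set.

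Item~(\ref{item:Cayley-5-p^2}) is the most delicate because the lower bound must match Frankl's upper bound $\lceil (|S|+1)/2 \rceil$ exactly, which is sharper than the $|S|/3$ obtained from the lemma. Here I plan to design $S_3 \subset \Z_5 \times \Z_p^2$ of size $2p$ whose projection to $\Z_p^2$ is a symmetric Sidon set of $2p$ points coming from a conic, and whose projection to $\Z_5$ distributes the generators among the nonzero cosets in a rigid way that forbids any ``shortcut'' $4$-cycle between cosets. The role of the $\Z_5$ factor is to force each cop to commit to at most one ``algebraic direction'' in $\Z_p^2$, so that covering all escape options of the robber really requires $\lceil (|S_3|+1)/2 \rceil = p+1$ cops. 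The main obstacle is producing a robber strategy that survives against $p$ cops rather than only $\Omega(p)$: I expect this to rely on a potential function tracking, for each cop, the unique $\Z_p^2$-direction along which it could close in on the robber, combined with a pigeonhole on $\Z_5$-cosets showing that any deployment of fewer than $p+1$ cops must leave some neighbor of the robber uncovered in every round.
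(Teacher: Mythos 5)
Your general framework is the paper's: a domination-counting lemma showing that a $K_{2,t}$-free graph of minimum degree $\delta$ has cop number at least $\delta/t$, applied to Cayley graphs whose generating sets are Sidon-type sets of size $\Theta(\sqrt{n})$. For item~(2) your conic is exactly the paper's norm-one circle $\{s : s^{q+1}=1\}$ in $\GF(q^2)$, and the correct justification is not that ``two chords with the same direction coincide'' (false already for a parabola) but that two translates of a nondegenerate conic with no points at infinity meet in at most two points, which is what bounds common neighborhoods by $2$. For item~(1) there is a concrete flaw in your symmetrization step: confining a Sidon set $T$ to an \emph{initial} segment $\{0,1,\dots,\lfloor n/4\rfloor\}$ does not prevent cross-collisions of the form $t_1+t_2=t_3+(-t_4)$, i.e.\ $t_1+t_2+t_4=t_3$, which a Sidon set does not forbid (e.g.\ $T=\{0,1,3,7\}$ gives $1+3=7-3$, creating a $K_{2,4}$ through $0$ and $4$). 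You must place $T$ in an interval bounded away from $0$, say $[m,2m]$, so that sums of two positives, two negatives, and one of each land in pairwise disjoint ranges; this is exactly why the paper's set lives in $[p^2,2p^2-2]$ with $n\geq 8p^2$. You also never ensure that $S_1$ generates $\Z_n$ (the paper adjoins $\pm 1$), and your parameter bookkeeping (a Sidon set of size $\sqrt{p}$ with $p\approx n$ squeezed into an interval of length $n/4$) is inconsistent, though the intended final constant can be recovered.

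The genuine gap is item~(3). You correctly observe that the generic lemma only yields $|S_3|/3$ and that something sharper is needed to reach $\lceil(|S_3|+1)/2\rceil=p+1$, but the mechanism you propose (a potential function tracking an ``algebraic direction'' per cop, plus a pigeonhole on $\Z_5$-cosets) is not developed, and you explicitly flag it as an obstacle you expect to overcome rather than an argument. The missing idea is much simpler: if the graph is additionally \emph{triangle-free}, then a cop adjacent to the robber's vertex $v$ dominates no neighbor of $v$ other than itself, and a cop at distance two dominates at most $t-1=2$ neighbors of $v$; hence each cop removes at most $2$ vertices from $N(v)\cup\{v\}$, and $(\delta+1)/2$ cops are required. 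The sole purpose of the $\Z_5$ factor is to kill triangles (three generators sum to $\pm1$ or $\pm3$ in the first coordinate, never $0\bmod 5$), while the parabola $\{(a,a^2)\}$ in the $\Z_p^2$ factor kills nontrivial $4$-cycles via the system $a+b\equiv c+d$, $a^2+b^2\equiv c^2+d^2 \pmod p$. Matching with Frankl's upper bound $\lceil(|S_3|+1)/2\rceil$ then pins the cop number to exactly $p+1$. Without this triangle-free refinement of the counting lemma, your item~(3) does not close.
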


We also prove that for \emph{any} abelian group $G$ of order $n$, such that $n$ is not divisible by 2 or 3,
there exists a set of generators $S \seq G$ such that the cop number of the corresponding Cayley graph $\Cayley(G, S)$ is lower bounded by $\Omega(n^{1/3})$.

\begin{ftheorem}\label{thm:root3-of-n-lb-for-all-cayley-graphs}
Let $G$ be any abelian group of order $n$ that contains no elements of order 2 or 3.
There exists a symmetric set of generator $S \seq G$ of size $|S| = \Omega(n^{1/3})$,
such that the Cayley graph $\Gamma = \Cayley(G, S)$ is connected and its cop number
is $c(\Gamma) \geq |S|/2 \geq \Omega(n^{1/3})$.
\end{ftheorem}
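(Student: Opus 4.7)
The plan is to combine a classical girth-based lower bound on the cop number with a probabilistic construction of a symmetric generating set whose Cayley graph has girth at least $5$. First, I would reprove the folklore fact that any $d$-regular graph $\Gamma$ of girth at least $5$ satisfies $c(\Gamma) \geq \lceil (d+1)/2\rceil$. In the step right before the robber at $r$ is captured, every vertex of the closed neighborhood $N[r]$ (of size $d+1$) must be threatened by some cop's next move; the girth condition implies that a cop at distance $1$ from $r$ threatens at most two such vertices (itself and $r$, since a common neighbor would yield a triangle), a cop at distance $2$ threatens at most one (two common neighbors would yield a $4$-cycle), and a cop farther away threatens none. So $d+1 \leq 2\, c(\Gamma)$. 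For $\Cayley(G, S)$ this reduces the theorem to constructing a symmetric set $S \subseteq G$ with $\langle S \rangle = G$, $|S| = \Omega(n^{1/3})$, and no $C_3$ or $C_4$ in $\Cayley(G, S)$.

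Next, I would apply the alteration method. Since $G$ has no element of order $2$, each pair $\{a, -a\}$ with $a \neq 0$ has size exactly $2$; include each such pair independently with probability $p = \epsilon n^{-2/3}$ for a sufficiently small constant $\epsilon > 0$, and let $S$ be the union of the chosen pairs, so that $\E[|S|] = \Theta(\epsilon n^{1/3})$. A triangle in $\Cayley(G, S)$ corresponds to a non-degenerate ordered triple $(s_1, s_2, s_3) \in S^3$ with $s_1+s_2+s_3 = 0$; the absence of order-$3$ elements in $G$ eliminates the otherwise-troublesome degenerate solution $s_1=s_2=s_3$, and a routine count then gives $\E[\# C_3] = O(p^3 n^2) = O(\epsilon^3)$. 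Similarly, a $4$-cycle corresponds to a quadruple summing to zero, giving $\E[\# C_4] = O(p^4 n^3) = O(\epsilon^4 n^{1/3})$. For $\epsilon$ small enough both quantities are dominated by $\E[|S|]$, so by linearity of expectation some realization satisfies $|S| - 2(\# C_3 + \# C_4) = \Omega(n^{1/3})$; deleting one symmetric pair per bad cycle leaves a symmetric $S' \subseteq S$ of size $\Omega(n^{1/3})$ inducing a Cayley graph of girth at least $5$.

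The last task is connectivity. Because $|G|$ is coprime to $6$, every proper subgroup $H < G$ has index at least $5$, so $|G \setminus H| \geq 4n/5$, and the event $S \subseteq H$ has probability at most $(1-p)^{2n/5} \leq e^{-\Omega(pn)} = e^{-\Omega(n^{1/3})}$. Union-bounding over the $n^{O(\log n)}$ subgroups of $G$ yields $\langle S \rangle = G$ with probability $1 - e^{-\Omega(n^{1/3})}$. To ensure this generating property survives the deletion step, I would fix inside $S$ from the start a deterministic symmetric generating set of size $O(\log n)$ (provided by the structure theorem for finite abelian groups) and forbid deletions from this seed; the extra generators add only $o(n^{1/3})$ to the short-cycle counts and do not affect the asymptotics. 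Combining everything yields a connected Cayley graph $\Cayley(G, S')$ of girth at least $5$ with $|S'| = \Omega(n^{1/3})$, and the girth-$5$ bound then gives $c(\Cayley(G, S')) \geq |S'|/2 = \Omega(n^{1/3})$. The main obstacle is reconciling girth with connectivity---the deletion required for the former can a priori damage the latter---which is precisely where the coprimality of $|G|$ with $6$ (implied jointly by the hypotheses on orders $2$ and $3$) enters, making proper subgroups rare enough that a small deterministic generating seed survives the alteration. The order-$3$ hypothesis is also what kills the degenerate triangle $3s=0$ in the probabilistic count, and the order-$2$ hypothesis keeps the pair-sampling procedure clean.
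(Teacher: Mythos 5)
There is a fatal obstruction to your plan: an abelian Cayley graph with a symmetric generating set containing two generators $s,s'$ with $s' \neq \pm s$ \emph{always} contains a $4$-cycle, namely $d,\ d+s,\ d+s+s',\ d+s'$ for every $d \in G$ (these four vertices are distinct and consecutive differences are $s, s', -s, -s' \in S$). Since $G$ has no elements of order $2$, any symmetric $S$ with $|S| \geq 4$ contains two such pairs, so girth $5$ is unattainable for any graph of degree more than $2$, and in particular for the degree-$\Omega(n^{1/3})$ graphs you need. Your count $\E[\#C_4] = O(p^4 n^3)$ misses exactly these ``trivial'' quadruples $s+s'+(-s)+(-s')=0$: they involve only two sampled pairs, so each occurs with probability $p^2$, and there are $\Theta(n^2)$ of them at the generator level (each yielding $n$ distinct $C_4$ subgraphs), giving an expected $\Theta(\epsilon^2 n^{2/3})$ ineradicable $4$-cycle classes against only $\Theta(\epsilon n^{1/3})$ generators. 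The alteration step would therefore delete essentially all of $S$. The same issue does not arise for triangles, so that half of your count is fine, but the $C_4$ half cannot be repaired within the girth-$5$ framework.

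The paper circumvents this by aiming for $K_{2,3}$-freeness rather than $C_4$-freeness: \cref{obs:4-cycles-K-23-free} shows that it suffices to exclude \emph{non-trivial} $4$-cycles (quadruples summing to zero other than $s+s'+(-s)+(-s')$), and \cref{lemma:K2t-free} then gives $c(\Gamma) \geq |S|/3$ from $K_{2,3}$-freeness alone, replacing your girth-$5$ cop bound. Concretely, the paper grows $S$ greedily: starting from a minimal generating set (which guarantees connectivity with no probabilistic argument needed), it repeatedly adds a pair $\{s^*,-s^*\}$ avoiding a forbidden set $\Forbidden_S$ of size $O(|S|^3)$ consisting of all sums $a+b+c$ with $a,b,c \in S$, all $a$ with $2a+b+c=0$, and all $a$ with $3a \in S$; the hypotheses on orders $2$ and $3$ are used to control the latter two sets and to rule out degenerate quadruples, not to make proper subgroups rare. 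Your proposal could likely be salvaged as a probabilistic variant of this if you redirected the counting to non-trivial quadruples only (stratified by how many distinct pairs they use, with probabilities $p^4$, $p^3$, $p^2$ accordingly) and invoked the $K_{2,t}$-free lemma in place of the girth bound, but as written the construction does not produce the graph it claims to.
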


%%%%%%%%%%%%%%%%%%%%%%%%%%%%%%%%%%%%%%%%%%%%%
\section{Preliminaries}\label{sec:lemmas}
%%%%%%%%%%%%%%%%%%%%%%%%%%%%%%%%%%%%%%%%%%%%%

We prove our results by presenting a family of Cayley graphs $\Cayley(G,S)$ on $|G| = n$ vertices that are $K_{2,t}$-free for some value of $t$. This shows an example of a family of abelian Cayley graphs that achieves (up to a multiplicative constant) the bound of K\"{o}v\'{a}ri, S\'{o}s, and Tur\'{a}n~\cite{KovariST54}
for (a special case of) the Zarankiewicz problem,
stating that any $K_{2,3}$-free graph on $n$ vertices has at most $O(n^{1.5})$ edges.
Specifically, we describe examples of Cayley graphs on $n$ vertices with a generating set of size $\Omega(\sqrt{n})$ that are $K_{2,3}$-free.
Apply the following lemmas on these constructions in order to lower bound their cop number.

\begin{lemma}\label{lemma:K2t-free}
Fix $t \geq 3$. If $G=(V,E)$ is a $K_{2,t}$-free graph of minimum degree $\delta$, then $c(G) \geq \delta/t$.
\end{lemma}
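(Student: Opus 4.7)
The plan is to construct a strategy for the robber that evades any $k$ cops as long as $k < \delta/t$, which then gives $c(G) \geq \delta/t$. The invariant to maintain is that, at the start of every cops' turn, the robber sits at a vertex $v$ such that $N[v]$ contains no cop; equivalently, $\dist(v, c_i) \geq 2$ for every cop position $c_i$. Under this invariant no cop can capture the robber on the next move, so if the robber can re-establish the invariant after every cops' move, capture is avoided forever.

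To maintain the invariant, suppose the cops move from $c_1, \ldots, c_k$ to $c_1', \ldots, c_k'$. I would have the robber pick a neighbor $w \in N(v)$ with $w \notin N[c_i']$ for every $i$, so that $\dist(w, c_i') \geq 2$ and the invariant holds at the start of the next cops' turn. The existence of such a $w$ comes from a counting bound that exploits $K_{2,t}$-freeness. For each cop with $c_i' \neq v$ (which always holds, since $c_i \notin N[v]$ prevents the cop from moving onto $v$),
\[
    N(v) \cap N[c_i'] \;=\; \bigl(N(v) \cap \{c_i'\}\bigr) \cup \bigl(N(v) \cap N(c_i')\bigr),
\]
where the first summand has size at most $1$ and the second, being the set of common neighbors of the distinct vertices $v$ and $c_i'$, has size at most $t-1$ by $K_{2,t}$-freeness. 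Hence each cop dominates at most $t$ vertices of $N(v)$, and
\[
    \Bigl|\, N(v) \cap \bigcup_{i=1}^{k} N[c_i'] \,\Bigr| \;\leq\; k t \;<\; \delta \;\leq\; |N(v)|,
\]
so an undominated $w \in N(v)$ exists and the invariant is preserved.

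For the initial position, the robber picks any vertex in $V \setminus \bigcup_i N[c_i]$, which exists provided the cops do not dominate $G$; this is comfortably the regime of interest for the paper's Cayley-graph applications, where $\delta = \Theta(\sqrt{n})$ and any $k < \delta/t$ cops have closed neighborhoods of total size $O(n/t) < n$. The step I expect to need the most care is the counting, where one must correctly handle the case $c_i' \in N(v)$ (in which $c_i'$ itself contributes one element to $N(v) \cap N[c_i']$) and then invoke $K_{2,t}$-freeness on the distinct pair $\{v, c_i'\}$ to control the common neighborhood. Once this bookkeeping is set up, the invariant is preserved forever, yielding $c(G) \geq \delta/t$.
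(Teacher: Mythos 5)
Your proof is essentially the paper's argument: the same invariant (the robber always sits at a vertex whose closed neighborhood is cop-free) maintained by the same counting, namely that each cop $c_i'$ dominates at most $t$ vertices of $N(v)$ ($c_i'$ itself plus at most $t-1$ common neighbors of the distinct pair $v,c_i'$ by $K_{2,t}$-freeness), so $kt<\delta\leq|N(v)|$ leaves an undominated neighbor. The one loose end is the initial placement: you justify the existence of an undominated starting vertex only ``in the regime of interest'' for the Cayley-graph applications, whereas the lemma is stated for arbitrary $K_{2,t}$-free graphs of minimum degree $\delta$. This is filled by your own counting applied once more: pick any $v\notin C$ (which exists since $|C|<\delta/t\leq n-1$); then $\bigl|N[v]\cap\bigcup_i N[c_i]\bigr|\leq 1+kt<1+\delta\leq|N[v]|$, so some vertex of $N[v]$ is undominated and can serve as the robber's start --- this is exactly how the paper phrases its single claim, which covers the initial round and all subsequent rounds uniformly by allowing the chosen vertex to lie in $N(v)\cup\{v\}$ rather than only $N(v)$.
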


%We can slightly improve the lower bound if we also assume that $G$ contains no triangles.

\begin{lemma}\label{lemma:K23-triangle-free}
Fix $t \geq 3$. If $G=(V,E)$ be a $\{C_3, K_{2,t}\}$-free graph of minimum degree $\delta$, then $c(G) > (\delta+1)/(t-1)$.
\end{lemma}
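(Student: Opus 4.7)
The plan is to sharpen the threat-counting argument behind Lemma~\ref{lemma:K2t-free} by leveraging the extra triangle-free hypothesis. At an arbitrary moment in the game with the robber at some vertex $v$ not currently occupied by a cop, I aim to exhibit a \emph{safe move}, i.e., some $v' \in N[v]$ that lies outside $\bigcup_i N[c_i]$; this guarantees the robber is not captured either by walking into a cop or by some cop reaching $v'$ on its next move.

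First I would classify the cops by their distance to $v$, noting that only cops within distance $2$ can threaten vertices of $N[v]$ in one move. For a cop at $u \in N(v)$ one has $N[u] \cap N[v] = \{u, v\} \cup (N(u) \cap N(v))$, and triangle-freeness forces $N(u) \cap N(v) = \emptyset$ since any common neighbour would close a triangle with $u$ and $v$; hence such a cop contributes exactly two threats to $N[v]$. For a cop $c$ at distance $2$, the $K_{2,t}$-free hypothesis applied to the pair $\{v, c\}$ bounds $|N[c] \cap N[v]| = |N(c) \cap N(v)| \leq t-1$.

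Let $k_1$ and $k_2$ count the cops at distance exactly $1$ and exactly $2$ from $v$ (all other cops are irrelevant). If the robber has no safe move in $N[v]$, then the union of threats covers $N[v]$, so $2 k_1 + (t-1) k_2 \geq |N[v]| = \delta + 1$. Moreover, $v$ itself can be threatened only by a cop in $N(v)$, so $k_1 \geq 1$. Substituting $k_1 + k_2 \leq k$ and using $k_1 \geq 1$ together with $t - 3 \geq 0$ gives $(t-1) k \geq \delta + 1 + (t-3) k_1 \geq \delta + t - 2$, whence $k \geq 1 + (\delta-1)/(t-1)$, which strictly exceeds $(\delta+1)/(t-1)$ as soon as $t \geq 4$.

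The delicate point, which I expect to be the main obstacle, is the borderline case $t = 3$, where the algebra above just gives $k \geq (\delta+1)/2$ with equality a priori possible. To push this to strict inequality one has to analyse the extremal configuration: equality forces the single cop in $N(v)$ to occupy some $u$ and the sets $N(c_i) \cap N(v)$ for the $k_2$ distance-two cops to partition $N(v) \setminus \{u\}$ into disjoint pairs, each of size exactly $t - 1 = 2$. I would argue that this rigid combinatorial pattern either cannot be produced by the $\{C_3, K_{2,t}\}$-free structure at all, or cannot be dynamically maintained against an adaptive robber over successive turns, so the cops must in fact invest one extra unit to close the game. The remaining algebraic manipulations and casework on $k_1$ I will skip, since they are routine given the threat counts above.
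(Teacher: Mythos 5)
Your counting is essentially the paper's: its \cref{claim:K2t-triangle-free} bounds, for each cop $c$, the number of vertices of $N[v]$ that $c$ dominates, using triangle-freeness to show that a cop adjacent to $v$ dominates only itself and $v$ inside $N[v]$, and $K_{2,t}$-freeness to show that a cop at distance two dominates at most $t-1$ neighbours of $v$. Those two observations are correct as you state them, and your refinement --- splitting the cops into $k_1$ at distance one and $k_2$ at distance two and using $k_1\ge 1$ --- is a mild sharpening that does deliver the strict inequality cleanly for all $t\ge 4$, which the paper's uniform ``$t-1$ threats per cop'' bound does not. (You omit the initial placement of the robber, but the same count handles it.)

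The gap is exactly where you locate it, and your proposed repair is not a proof. For $t=3$ your inequality collapses to $k\ge(\delta+1)/2$, and the assertion that the tight configuration ``either cannot be produced \dots\ or cannot be dynamically maintained'' is pure speculation: you give no argument, and nothing in the $\{C_3,K_{2,3}\}$-free hypothesis obviously excludes a cop in $N(v)$ together with distance-two cops whose neighbourhoods partition the rest of $N(v)$ into pairs. Note that $t=3$ is precisely the case the paper uses (for $\Gamma_3$, to squeeze $c(\Gamma_3)\ge p+1$ against Frankl's upper bound of $p+1$), so it cannot be brushed aside. You should also know that the paper's own proof does no better here: its claim is established for $|C|\le\delta/(t-1)$, which yields $c(G)>\delta/(t-1)$, i.e.\ $c(G)\ge\lfloor\delta/(t-1)\rfloor+1$; when $t-1$ divides $\delta+1$ this equals $(\delta+1)/(t-1)$, so the strict form in the lemma statement is not actually delivered in that edge case either. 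The honest fix is not an extremal-configuration analysis but integrality: what the counting gives in every case is $c(G)\ge\lfloor\delta/(t-1)\rfloor+1$, and in the application $\delta=2p$ is even and $t=3$, so this already equals $p+1>(\delta+1)/2$. Replace your speculative closing paragraph with that observation (or weaken the stated bound to $c(G)>\delta/(t-1)$) rather than promising casework you have not done.
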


Aigner and Fromme~\cite{fromme1984game} showed that if $G$ does not contain $C_3$ and $C_4$ then $c(G) \geq \delta$ holds.
Frankl~\cite{Frankl1987} showed that if $G$ does not contain $C_3$ and $K_{2,3}$ then $c(G) \geq (\delta+1)/2$.
Bonato and Burgess~\cite{BonatoBurgess13} also proved similar results.

\begin{proof}[Proof of \cref{lemma:K2t-free}]
    We prove that if the number of cops is less than $\delta/t$, then the robber can avoid the cops forever.
    Specifically, we prove the following claim.
    \begin{claim}\label{claim:K2t-free}
        For every $C \seq V$ of size $|C| < \delta/t$ and for every $v \in V \setminus C$ there is some $u \in N(v) \cup \{v\}$ that is not dominated by $C$,
        i.e., $u \notin \D(C)$, where $\D(C) = \cup_{c \in C} \D(c)$, and $\D(c) = \{c\} \cup N(c)$ are the vertices at distance at most 1 from $c$.
    \end{claim}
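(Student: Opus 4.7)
The plan is to prove the slightly stronger statement that $N(v) \not\seq \D(C)$ whenever $v \notin C$ and $|C| < \delta/t$; this immediately yields the claim since $N(v) \seq N(v) \cup \{v\}$. To this end I will bound, for each cop $c \in C$ individually, the number of vertices of $N(v)$ that $c$ dominates, and then sum the contributions over all cops.

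The key tool is the following reformulation of $K_{2,t}$-freeness: for any two distinct vertices $x, y \in V$ one has $|N(x) \cap N(y)| \leq t-1$, since otherwise $\{x,y\}$ together with $t$ common neighbors would form a forbidden $K_{2,t}$. Using this, I claim $|\D(c) \cap N(v)| \leq t$ for every $c \in C$, by a case split on whether $c$ is a neighbor of $v$. If $c \notin N(v)$, then $\D(c) \cap N(v) = N(c) \cap N(v)$, of size at most $t-1$. If $c \in N(v)$, then $c$ itself contributes one element to $\D(c) \cap N(v)$, while the remaining elements lie in $N(c) \cap N(v)$ and are therefore at most $t-1$ in number, giving a total of at most $t$. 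Since $v \notin C$, every cop falls into one of these two cases, so summing yields $|\D(C) \cap N(v)| \leq t \cdot |C| < t \cdot (\delta/t) = \delta \leq |N(v)|$. Hence $N(v) \setminus \D(C)$ is nonempty, and any vertex in it serves as the required $u$.

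The main subtlety lies in the case $c \in N(v)$, where naively $c$ itself contributes an extra dominated vertex beyond the common-neighbor count; it is precisely the $K_{2,t}$-free bound on common neighbors of $v$ and $c$ which keeps the per-cop contribution at $t$ rather than something larger. This case is also the one that pins down the factor of $1/t$ in the $\delta/t$ threshold, so one should not expect to push the argument below $\delta/t$ using only the $K_{2,t}$-free hypothesis.
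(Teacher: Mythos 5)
Your proof is correct and follows essentially the same argument as the paper: both bound $|\D(c) \cap N(v)| \leq t$ for each cop via the same case split on whether $c \in N(v)$, using the $K_{2,t}$-free bound on common neighbours, and then sum over $C$ and compare against $\delta$. The only cosmetic difference is that you count only over $N(v)$ while the paper counts over $\{v\} \cup N(v)$ with an extra $+1$; the conclusions coincide.
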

    \begin{proof}[Proof of \cref{claim:K2t-free}]
    Note that since $G$ is $K_{2,t}$-free, every $c \in C$ dominates at most $t$ neighbours of $v$,
    i.e., $|N(v) \cap \D(c)| \leq t$.%
    \footnote{
    If $c$ is not a neighbour of $v$, then it can dominate at most $t-1$ other neighbours of $v$.
    Otherwise it can dominate at most $t-1$ neighbours of $v$ other than itself.}
    Thus, the number of vertices in $\{v\} \cup N(v)$ that are dominated by $C$ is at most $|\{v\} \cup \left(\cup_{c \in C} (N(v) \cap \D(c)) \right)| \leq 1 + t|C|$.
    Therefore, if $|C| < \delta/t$, then the number of vertices in $\{v\} \cup N(v)$ that are dominated by $C$ is \emph{strictly less} than $1 + \delta \leq 1+\deg(v)$,
    and hence there is some $u \in N(v) \cup \{v\}$ that is not dominated by $C$.
    \end{proof}

    This implies that
    \begin{inparaenum}[(i)]
    \item
    in the initial round, given the locations $C \seq V$ of the cops, the robber can choose a vertex $u$
    so that $u \notin \D(C)$, and hence the cops cannot reach $u$ in the first round;
    \item
    in the subsequent rounds, given the locations $C$ of the cops, if the robber is in the vertex $v$
    then it can move to some $u \in N(v)$ so that $u \notin \D(C)$, and hence the cops capture it in the next round.
    \end{inparaenum}
\end{proof}

\begin{proof}[Proof of \cref{lemma:K23-triangle-free}]
    The proof of \cref{lemma:K23-triangle-free} is analogous to the above.
    The only difference is the analogue of \cref{claim:K2t-free} for $\{C_3, K_{2,t}\}$-free graphs.

    \begin{claim}\label{claim:K2t-triangle-free}
        For every $C \seq V$ of size $|C| \leq \delta/(t-1)$ and for every $v \in V \setminus C$ there is some $u \in N(v) \cup \{v\}$ that is not dominated by $C$,
        i.e., $u \notin \D(C)$.
    \end{claim}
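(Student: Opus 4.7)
The plan is to mirror the proof of \cref{claim:K2t-free}, using the extra triangle-freeness hypothesis to sharpen the per-cop contribution to $\D(C) \cap (N(v) \cup \{v\})$. The key observation is that a cop $c \in C$ that is adjacent to $v$ cannot share any other neighbor with $v$, since that would produce a triangle; so such a cop contributes only $\{c\}$ to $N(v)$, rather than up to $t - 1$ vertices as in the pure $K_{2,t}$-free analysis.

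The first step will be to split $C$ into $C_1 := C \cap N(v)$ and $C_2 := C \setminus C_1$ (these partition $C$ since $v \notin C$) and case split on whether $C_1$ is empty. The easy case is $C_1 = \emptyset$: here no cop lies in $N(v) \cup \{v\}$, so $v$ itself is undominated and we take $u = v$. Notice this case uses no hypothesis on $|C|$ at all.

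The main case is $C_1 \neq \emptyset$. Since $v$ is then dominated, the witness $u$ must be found inside $N(v)$, and I will count dominated vertices of $N(v)$ by bounding each cop's contribution. Each $c \in C_1$ contributes only itself to $\D(c) \cap N(v)$ by $C_3$-freeness, while each $c \in C_2$ contributes at most $t-1$ vertices by $K_{2,t}$-freeness (reusing the counting step from \cref{claim:K2t-free}). This gives the bound $|C_1| + (t-1)|C_2|$, which rearranges as $(t-1)|C| - (t-2)|C_1|$; applying $|C_1| \geq 1$ and $|C| \leq \delta/(t-1)$ and $t \geq 3$ yields a count that is strictly smaller than $\deg(v) \geq \delta$, and hence some $u \in N(v)$ is undominated.

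The main conceptual point to watch is that the triangle-free saving only helps when some cop is actually adjacent to $v$, so the $C_1 = \emptyset$ case must be peeled off first; fortunately it is trivial, and the remaining case is where the triangle-free hypothesis does the work. Once \cref{claim:K2t-triangle-free} is in hand, the rest of \cref{lemma:K23-triangle-free} carries over verbatim from the proof of \cref{lemma:K2t-free}: the robber picks a starting vertex outside $\D(C)$ using the claim, and in subsequent rounds moves from its current vertex $v$ to an undominated $u \in N(v) \cup \{v\}$, which the strengthened claim guarantees exists whenever the cop team has at most $\delta/(t-1)$ members.
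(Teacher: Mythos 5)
Your proposal is correct and uses essentially the same argument as the paper: triangle-freeness forces a cop adjacent to $v$ to dominate no neighbour of $v$ other than itself, and $K_{2,t}$-freeness bounds a non-adjacent cop's contribution by $t-1$, after which a counting argument over $N(v)\cup\{v\}$ produces the undominated vertex. The only difference is bookkeeping --- you case-split on whether some cop lies in $N(v)$ and locate the witness inside $N(v)$, whereas the paper bounds every cop's contribution to $\{v\}\cup N(v)$ uniformly by $t-1$ --- and both yield the claim under $|C|\leq \delta/(t-1)$.
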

    \begin{proof}[Proof of \cref{claim:K2t-triangle-free}]
    Note that since $G$ is $\{C_3, K_{2,t}\}$-free, every $c \in C$ dominates at most $t-1$ neighbours of $v$,
    i.e., $|N(v) \cap \D(c)| \leq t-1$.%
    \footnote{
    If $c$ is not a neighbour of $v$, then it can dominate at most $t-1$ other neighbours of $v$.
    Otherwise it can dominate no neighbour of $v$ other than itself.}
    Furthermore, since $G$ is $C_3$-free and $v \notin C$, if $v \in \D(c)$, then $c$ dominates no neighbour of $v$.
    Thus, the number of vertices in $\{v\} \cup N(v)$ that are dominated by $C$ is at most $(t-1)|C|$.
    Therefore, if $|C| \leq \delta/(t-1)$, then the number of vertices in $\{v\} \cup N(v)$ dominated by $C$ is at most $\delta \leq \deg(v)$,
    and hence $\exists u \in N(v) \cup \{v\}$ not dominated by $C$.
    \end{proof}
    The rest of the proof is exactly as in the proof of \cref{lemma:K2t-free}.
\end{proof}

\medskip
We will also need the following observation on Cayley graphs.
Let $\Gamma = \Cayley(G,S)$ be a Cayley graph with a symmetric set of generators $S$.
A 4-cycle (or a $K_{2,2}$) in $\Gamma$ is a collection of 4 edges corresponding to some generators $a,b,c,d \in S$ such that $a+b+c+d = 0$ (the elements are not necessarily distinct).
Observe first that any Cayley graph $\Gamma$ trivially contains a 4-cycle. Indeed, for any $s,s' \in S$
and any $d \in G$ and $d' = d+s+s'$ the vertices $\{d,d'\} \cup \{d+s,d+s'\}$ span a $K_{2,2}$.
Such 4-cycles in $\Gamma$ will be called ``trivial'', as they correspond to the trivial four tuple of elements in $S$ whose sum is zero, namely, $s+s'+(-s)+(-s') = 0$.
The following observation will be used several times in this paper.

\begin{observation}\label{obs:4-cycles-K-23-free}
  Let $\Gamma = \Cayley(G,S)$ be a Cayley graph with a symmetric set of generators $S$.
  If $\Gamma$ contains no non-trivial 4-cycles, then $\Gamma$ is $K_{2,3}$-free.
\end{observation}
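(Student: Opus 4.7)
The plan is to prove the contrapositive: if $\Gamma$ contains a copy of $K_{2,3}$, then $\Gamma$ contains a non-trivial 4-cycle. I would start by fixing a $K_{2,3}$ embedded in $\Gamma$ with bipartition $\{u, v\}$ and $\{w_1, w_2, w_3\}$ (five distinct vertices), and derive a contradiction under the hypothesis that every 4-cycle of $\Gamma$ is trivial. For each $i \in \{1, 2, 3\}$, set $s_i := w_i - u$ and $t_i := w_i - v$; both lie in $S$ because the edges $\{u, w_i\}$ and $\{v, w_i\}$ are present in $\Gamma$.

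For every pair $i < j$, the traversal $u \to w_i \to v \to w_j \to u$ is a 4-cycle in $\Gamma$ whose cyclic generator sequence is $(s_i, -t_i, t_j, -s_j)$. The key intermediate step is a clean characterization of triviality: a 4-cycle with cyclic generator sequence $(g_1, g_2, g_3, g_4)$ is a cyclic rotation of the template $(s, s', -s, -s')$ if and only if $g_1 = -g_3$ and $g_2 = -g_4$, i.e., opposite edges around the cycle carry inverse generators. Applying this characterization to each of our three 4-cycles under the no-non-trivial-4-cycle hypothesis forces $t_j = -s_i$ and $s_j = -t_i$ for every pair $(i,j) \in \{(1,2), (1,3), (2,3)\}$.

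From the pair $(1,2)$ one reads off $s_2 = -t_1$, and from $(1,3)$ one reads off $s_3 = -t_1$; combining these gives $s_2 = s_3$, whence $w_2 = u + s_2 = u + s_3 = w_3$, contradicting $w_2 \neq w_3$. This contradiction shows that no $K_{2,3}$ can exist in $\Gamma$ once non-trivial 4-cycles are forbidden.

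The main obstacle is really just the triviality characterization itself, since the rest of the argument is an immediate algebraic forcing. That characterization is elementary—each cyclic rotation and each direction reversal of $(s, s', -s, -s')$ satisfies $g_1 = -g_3$ and $g_2 = -g_4$, and conversely these two equations identify the sequence with such a rotation—but the small case check deserves attention to ensure that no alternative pairing of the multiset $\{g_1, g_2, g_3, g_4\}$ into two inverse pairs produces a trivial cycle in a way the conditions $g_1 = -g_3,\ g_2 = -g_4$ would miss.
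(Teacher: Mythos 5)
Your proposal is correct and takes essentially the same approach as the paper's proof: both extract from the $K_{2,3}$ the three 4-cycles $u \to w_i \to v \to w_j \to u$ (whose generator quadruples sum to zero) and show that triviality of all of them would force two of the vertices $w_i$ to coincide. Your explicit forcing step ($t_j = -s_i$ and $s_j = -t_i$, equivalently $w_i + w_j = u + v$), together with the flagged check that the distinctness of the five vertices rules out the adjacent inverse pairings, is just a spelled-out version of the paper's closing assertion that not all three cycles can be trivial.
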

\begin{proof}
    Suppose toward contradiction that $\Gamma$ contains a copy of $K_{2,3}$ with vertices $\{a,a'\}$ on one side and $b,b',b''$ on the other side.
    Then $S$ contains the generators $\{s_1 = b-a, s_1' = a'-b, s_2 = b'-a, s_2' = a'-b', s_3 = b''-a, s_3' = a'-b''\}$
    with $s_i \neq -s_i'$ for all $i = 1,2,3$.
    Observe that $s_1 + s_1' = s_2 + s_2' = s_3 + s_3'$, as all three are equal to $a'-a$.
    Therefore, since $S$ is symmetric, $\Gamma$ contains the 4-cycles corresponding to the sums $s_i+s_i'+(-s_j)+(-s_j') = 0$
    for $1 \leq i < j \leq 3$, and it is impossible for all of them to be trivial cycles.
\end{proof}

We will also need the following simple number theoretic lemma.

\begin{lemma}\label{lemma:equal-sums}
  Let $p \geq 3$ be a prime, and let $a,b,c,d$ be integers such that
  \begin{flalign*}
        a + b &\equiv c + d \bmod p \\
        a^2 + b^2 &\equiv c^2 + d^2 \bmod p \enspace.
  \end{flalign*}
  Then either ($a \equiv c \bmod p$ and $b \equiv d \bmod p$)  or ($a \equiv d \bmod p$ and $b \equiv c \bmod p$).
\end{lemma}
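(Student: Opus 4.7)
The plan is to interpret the two hypotheses as saying that the pairs $(a,b)$ and $(c,d)$ have the same elementary symmetric functions modulo $p$, so they are the (unordered) roots of the same monic quadratic over $\F_p$.

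First I would set $s \equiv a+b \equiv c+d \pmod p$ and $q \equiv a^2+b^2 \equiv c^2+d^2 \pmod p$. Using the identity $(a+b)^2 = a^2 + 2ab + b^2$, I get $2ab \equiv s^2 - q \equiv 2cd \pmod p$. Here I would use the hypothesis $p \geq 3$, which ensures $2$ is invertible in $\F_p$, to conclude $ab \equiv cd \pmod p$. So the pairs $\{a,b\}$ and $\{c,d\}$ share the same sum and the same product modulo $p$.

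Next I would observe that this means $a$ and $b$ are the two roots (with multiplicity) of the polynomial
\[
f(x) = x^2 - s x + ab \in \F_p[x],
\]
and that $c$ and $d$ are also roots of this same polynomial, since $c+d \equiv s$ and $cd \equiv ab \pmod p$ by Vieta's formulas. Because $f$ has degree $2$ over the field $\F_p$, it has at most two roots (counted with multiplicity) in $\F_p$, so as multisets $\{a,b\} = \{c,d\}$ in $\F_p$. This forces either $a \equiv c, b \equiv d$ or $a \equiv d, b \equiv c$, as claimed.

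There is no real obstacle here; the only subtlety is the use of $p \geq 3$ to divide by $2$, which is precisely the hypothesis provided. The lemma would fail in characteristic $2$, where $a^2+b^2 = (a+b)^2$ is automatic from $a+b = c+d$, and one cannot recover $ab$ from $a+b$ and $a^2+b^2$.
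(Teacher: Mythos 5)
Your proof is correct, but it takes a genuinely different route from the paper. The paper argues directly: assuming $a \not\equiv c \bmod p$ (hence $b \not\equiv d \bmod p$), it rewrites the second hypothesis as $(a-c)(a+c) \equiv (d-b)(d+b) \bmod p$, cancels the common nonzero factor $a-c \equiv d-b$ to get $a+c \equiv b+d \bmod p$, and then solves the resulting $2\times 2$ linear system to conclude $a \equiv d$ and $b \equiv c$. You instead recover the product $ab \equiv cd \bmod p$ from the power sums (using that $2$ is invertible) and invoke Vieta's formulas together with unique factorization of the monic quadratic $x^2 - sx + ab$ over $\F_p$ to identify $\{a,b\}$ and $\{c,d\}$ as multisets. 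Both arguments use $p \geq 3$ in an essential way --- the paper when dividing by $2$ in the linear system, you when recovering $ab$ from $s^2 - q$ --- and your closing remark about the failure in characteristic $2$ applies equally to both. Your approach is somewhat more conceptual and generalizes more readily (e.g., to $k$ pairs of power sums determining a multiset of size $k$ when the characteristic exceeds $k$, via Newton's identities), while the paper's is a shorter self-contained computation; one minor point worth making explicit in your write-up is that the multiset equality follows from the factorization $x^2 - sx + ab = (x-a)(x-b) = (x-c)(x-d)$ in $\F_p[x]$ and uniqueness of factorization there, rather than merely from the bound on the number of roots.
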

\begin{proof}
  Suppose that $a \not\equiv c \bmod p$, and therefore $b \not\equiv d \bmod p$.
  Then equation $a^2 + b^2 \equiv c^2 + d^2 \bmod p$ implies that
  $(a-c)(a+c) \equiv (d-b)(d+b) \bmod p$, and since $a-c  \equiv b-d \not\equiv 0 \bmod p$, it follow that $a+c \equiv b+d \bmod p$.
  this gives us the following system of equations.
  \begin{flalign*}
        a - c &\equiv d - b \bmod p \\
        a + c &\equiv d + b \bmod p \enspace.
  \end{flalign*}
  It is easy to see that all solutions must satisfy $a \equiv d \bmod p$ and $b \equiv c \bmod p$, as required.
\end{proof}

%%%%%%%%%%%%%%%%%%%%%%%%%%%%%%%%%%%%%%%%%%%%%
\section{Proofs of our results}\label{sec:proof}
%%%%%%%%%%%%%%%%%%%%%%%%%%%%%%%%%%%%%%%%%%%%%

In this section we prove \cref{thm:sqrt-lb} and \cref{thm:root3-of-n-lb-for-all-cayley-graphs}.

%%%%%%%%%%%%%%%%%%%%%%%%%%%%%%%%%%%%%%%%%%%%%
\subsection{Proof of Theorem \ref{thm:sqrt-lb} \cref{item:Cayley-Z_n}}
%%%%%%%%%%%%%%%%%%%%%%%%%%%%%%%%%%%%%%%%%%%%%
Fix a prime number $p \geq 5$.
For all $a \in \N$ define $s_a = (p^2+(a^2 \bmod p)p+a) \bmod 8p^2$, where $a^2 \bmod p$ is treated as an integer in $\{0,1,\dots,p-1\}$.
Note that $p^2 \leq s_a \leq 2p^2-2$ for all $0 \leq a \leq  p-1$ (where $s_a$ is treated as integer).%
\footnote{Indeed, for $0 \leq a \leq  p-2$ we have $s_a \leq p^2 + (p-1)p + a \leq 2p^2-2$,
and for $a = p-1$ we have $s_a = p^2 + p + (p-1) \leq 2p^2 - 2$.}. Define the sets $S^+=\{s_a: a \in \{0,1,\dots,(p-1)/2\}\}$, $S^-=-S^+$, and let $S=S^+ \cup S^-$.

\begin{lemma}\label{lemma:z-n-C-3-C-4-free}
The set $S$ satisfies the following properties.
\begin{enumerate}
  \item $s_a \neq s_{a'}$ for all $0 \leq a < a' \leq (p-1)/2$. In particular, $|S| = p+1$. \label{item:distinct-s-a}
  \item For any $s_1,s_2,s_3 \in S$ it holds that $2 \leq |s_1 + s_2 + s_3| < 6p^2$. \label{item:3cycle}
  \item Let $s_1 \geq s_2 \geq s_3 \geq s_4$ be elements in $S$ such that $s_1 + s_2 + s_3 + s_4 = 0$.
  Then $s_1 = -s_4$ and $s_2 = -s_3$. \label{item:4cycle}
\end{enumerate}
\end{lemma}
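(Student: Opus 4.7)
The plan is to work with each $s_a$ as a positive integer in the interval $[p^2, 2p^2-2]$ (the footnote shows the reduction modulo $8p^2$ is inert), so that every signed sum of at most four elements of $S$ lies in $(-8p^2, 8p^2)$; hence vanishing as an integer coincides with vanishing modulo $8p^2$, and more generally sign and magnitude information is preserved. The key structural fact is that $s_a = p^2 + (a^2 \bmod p)p + a$ is, in base $p$, a three-digit ``fingerprint'' of the pair $(a, a^2 \bmod p)$, and that the restriction $a \leq (p-1)/2$ prevents carries when we add two such fingerprints.

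For item 1, I would observe that $s_a \equiv a \pmod p$ reads off immediately, so distinct $a \in \{0,\dots,(p-1)/2\}$ give distinct $s_a$; combined with $S^+ \cap S^- = \emptyset$ (each $s_a$ is a strictly positive integer) this yields $|S| = p+1$. For item 2, I would perform a short case analysis on how many of $s_1, s_2, s_3$ lie in $S^+$. Using $|s_i| \in [p^2, 2p^2-2]$, the sum is at least $3p^2$ when all three lie in $S^+$, is at least $2p^2 - (2p^2-2) = 2$ when exactly two lie in $S^+$, and by symmetry is at most $-2$ in the remaining cases; the upper bound $|s_1+s_2+s_3| < 6p^2$ is immediate.

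Item 3 is the main step. The same sign analysis shows that a signed sum of four elements of $S$ with magnitudes in $[p^2, 2p^2-2]$ can vanish only if exactly two lie in $S^+$ and two in $S^-$; the ordering $s_1 \geq s_2 \geq s_3 \geq s_4$ then forces $s_1, s_2 \in S^+$ and $s_3, s_4 \in S^-$. Writing $s_1 = s_a$, $s_2 = s_b$, $s_3 = -s_c$, $s_4 = -s_d$ with $a,b,c,d \in \{0,\dots,(p-1)/2\}$, the condition $s_1+s_2+s_3+s_4 = 0$ becomes $s_a + s_b = s_c + s_d$. Expanding via $s_x = p^2 + (x^2 \bmod p)p + x$ and cancelling the $2p^2$ offsets gives
\[
(A+B)p + (a+b) \;=\; (C+D)p + (c+d),
\]
where $A,B,C,D \in \{0,\dots,p-1\}$ are the reductions of $a^2, b^2, c^2, d^2$ modulo $p$. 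Since $a+b, c+d \in [0, p-1]$ (using $a,b,c,d \leq (p-1)/2$), reducing this identity modulo $p$ first forces $a+b = c+d$ as integers, and then forces $A+B = C+D$ as integers. A second reduction modulo $p$ converts the latter into $a^2 + b^2 \equiv c^2 + d^2 \pmod p$, at which point \cref{lemma:equal-sums} applies and yields $\{a,b\} \equiv \{c,d\} \pmod p$. Since all four values lie in $[0,(p-1)/2] \subset [0,p-1]$, we conclude $\{a,b\} = \{c,d\}$ as multisets, so $\{s_1,s_2\} = \{-s_3,-s_4\}$; the ordering constraints finally pin down $s_1 = -s_4$ and $s_2 = -s_3$.

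The main obstacle is the bookkeeping in item 3: one must recover both ``digits'' $a+b$ and $A+B$ from $s_a + s_b$ without base-$p$ carries interfering, which is exactly what the restriction $a,b \leq (p-1)/2$ is engineered to guarantee, and then convert the two resulting symmetric-function equalities over $\Z/p\Z$ into a genuine multiset equality via \cref{lemma:equal-sums}.
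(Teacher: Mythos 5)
Your proposal is correct and follows essentially the same route as the paper: the same sign/magnitude case analysis using $s_a \in [p^2, 2p^2-2]$ for items 2 and 3, reduction of the two-positive/two-negative case to the congruences $a+b \equiv c+d$ and $a^2+b^2 \equiv c^2+d^2 \pmod p$, and an appeal to \cref{lemma:equal-sums}. The only difference is that you spell out the carry-free base-$p$ ``digit'' extraction (using $a+b, c+d \leq p-1$) that the paper compresses into ``by definition of $s_{a_i}$ this implies,'' which is a welcome clarification rather than a deviation.
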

\begin{proof}
For \cref{item:distinct-s-a} observe that all $s_a$'s are distinct, as they are distinct modulo $p$,
and analogously, all elements in $S^-$ are distinct. It is also clear that $S^+$ and $S^-$ are disjoint.

For \cref{item:3cycle}, let $a_1,a_2,a_3 \in \{0,1,\dots,(p-1)/2\}$ be such that $s_i \in \{\pm s_{a_i}\}$ for all $i=1,2,3$.
Suppose first that $s_1,s_2,s_3 \in S^+$, i.e., $s_i = s_{a_i}$ for all $i = 1,2,3$.
Then the sum $s_1+s_2+s_3 = s_{a_1}+s_{a_2}+s_{a_3}$ is between $3p^2$ and $3(2p^2-2) < 6p^2$.
Similarly, if $s_1,s_2,s_3 \in S^-$, then $s_i = -s_{a_i}$ for all $i = 1,2,3$, and hence $-6p^2 < -3(2p^2-2) \leq s_1+s_2+s_3 \leq -3p^2$, as required.

Next, consider the case where two elements are in $S^+$ and one is in $S^-$.
Then, the sum of the corresponding elements is $s_{a_1} +s_{a_2} - s_{a_3} \geq p^2 + p^2 - (2p^2-2) \geq 2$, as required.
The case of one element in $S^+$ and two elements in $S^-$ is similar.

For \cref{item:4cycle} consider the cases based on how many elements $s_i$'s are in $S^+$ or in $S^-$.
\begin{itemize}
  \item If all four elements are in $S^+$ or all four elements are in $S^-$, then their sum cannot be zero.
  \item If three elements are in $S^+$ and one element is in $S^-$, then their sum cannot be zero, as $s_1+s_2+s_3+s_4 \geq 3 p^2 - (2p^2-2) = p^2 + 2 > 0$.
  Similarly, if three elements are in $S^-$ and one element is in $S^+$.
  \item Finally, consider the case where $s_1,s_2 \in S^+$ and $s_3,s_4 \in S^-$.
  Let $a_1,a_2,a_3,a_4 \in \{0,1,\dots,(p-1)/2\}$ be such that $s_1 = s_{a_1}, s_2 = s_{a_2}, s_3 = -s_{a_3}, s_4 = -s_{a_4}$,
  and hence $s_{a_1}+s_{a_2} = s_{a_3} + s_{a_4}$.
  Observe that by definition of $s_{a_i}$ this implies
  \begin{flalign*}
        a_1 + a_2 &\equiv a_3 + a_4 \bmod p \\
        a_1^2 + a_2^2 &\equiv a_3^2 + a_4^2 \bmod p \enspace.
  \end{flalign*}
By \cref{lemma:equal-sums} all solutions to this system of equations satisfy $(a_1 = a_3, a_2 = a_4)$  or $(a_1 = a_4, a_2 = a_3)$.
Therefore, the assumption $s_1 \geq s_2 \geq s_3 \geq s_4$ implies that $a_1 = a_4$ and $a_2 = a_3$.
This completes the proof of \cref{lemma:z-n-C-3-C-4-free}. \qedhere
\end{itemize}
\end{proof}

We are now ready to prove \cref{item:Cayley-Z_n} of \cref{thm:sqrt-lb}. Fix an integer $n$.
Baker, Harman, and Pintz proved in~\cite{BakerHP01} that for all sufficiently large $x$, there exists a prime between $x- x^{0.525}$ and $x$.
In particular, for $x = \sqrt{n/8}$ there exists a prime $p$ such that $\sqrt{n/8} - (n/8)^{0.2625} \leq p \leq \sqrt{n/8}$.

Let $S_1=S \cup\{-1,1\}$ be the set of generators in $\Z_n$, where $S = S^+\cup S^-$ is as above.
Note that $|S_1| \geq |S| = 2p$, and $\Gamma_1$ is connected since $S_1$ is a generating set of $\Z_n$ as $1 \in S_1$.

\begin{claim}\label{claim:z-n-C-3-K-23-free}
    The Cayley graph $\Gamma_1=\Cayley(\Z_n, S_1)$ is $\{C_3,K_{2,4}\}$-free.
\end{claim}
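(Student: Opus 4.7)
The plan is to reduce both the $C_3$-freeness and the $K_{2,4}$-freeness of $\Gamma_1$ to a case analysis of zero-sums of elements of $S_1 = S \cup \{\pm 1\}$ in $\Z_n$, split according to how many summands come from $\{\pm 1\}$ versus from $S = S^+ \cup S^-$. For the $K_{2,4}$ part I would in fact prove the strictly stronger $K_{2,3}$-freeness by showing that every 4-cycle of $\Gamma_1$ is trivial and invoking \cref{obs:4-cycles-K-23-free}. The useful global fact throughout is $n \geq 8p^2$, so any integer expression of absolute value strictly less than $n$ vanishes in $\Z_n$ iff it vanishes in $\Z$.

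For $C_3$-freeness I would suppose $s_1 + s_2 + s_3 \equiv 0 \pmod{n}$ with $s_i \in S_1$. If all three lie in $S$, then \cref{lemma:z-n-C-3-C-4-free}(\ref{item:3cycle}) already gives $2 \leq |s_1+s_2+s_3| < 6p^2 \leq n$, so the sum is nonzero mod $n$. If exactly one $s_i$ lies in $\{\pm 1\}$, the remaining two-element $S$-sum would have to equal $\pm 1$; here I would use that every element of $S$ has absolute value in $[p^2,\,2p^2{-}2]$, so a sum of two same-sign elements has magnitude $\geq 2p^2$, while a sum of opposite-sign elements has the form $\pm(s_a - s_b)$ with $a \neq b$, which by the mod-$p$ argument sketched below cannot equal $\pm 1$. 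The cases with two or three $\pm 1$ summands are then handled by the same magnitude bound $|s_a| \geq p^2$.

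For 4-cycles I would show that every relation $a + b + c + d \equiv 0 \pmod{n}$ with $a,b,c,d \in S_1$ is trivial in the sense of \cref{obs:4-cycles-K-23-free}. The all-in-$S$ case is \cref{lemma:z-n-C-3-C-4-free}(\ref{item:4cycle}). Otherwise, after peeling off the $\pm 1$'s, the remaining equation becomes either a single $s_a$ equal to $\pm 1, \pm 3$, or a three-element $S$-sum equal to $\pm 1, \pm 3$, or a two-element $S$-sum equal to $0$ or $\pm 2$. The $\pm 1, \pm 3$ cases are ruled out instantly by $|s_a| \geq p^2$ and by \cref{lemma:z-n-C-3-C-4-free}(\ref{item:3cycle}); the two-element $S$-sum equal to $0$ is precisely the pairing that yields a trivial cycle. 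The main obstacle is thus ruling out $s_a - s_b = \pm 2$ with $a \neq b$. I would handle it by exploiting the explicit form $s_a = p^2 + (a^2 \bmod p)\,p + a$: reducing mod $p$ forces $a \equiv b \pm 2 \pmod{p}$, so in the range $a, b \in [0,(p-1)/2]$ one has $a - b = \pm 2$ in $\Z$; the vanishing coefficient of $p$ then forces $a^2 \equiv b^2 \pmod{p}$, hence $a \equiv -b \pmod{p}$, which combined with $|a-b|=2$ and the range constraint contradicts $p \geq 5$. Once every 4-cycle is seen to be trivial, \cref{obs:4-cycles-K-23-free} yields that $\Gamma_1$ is $K_{2,3}$-free, and in particular $K_{2,4}$-free, as required.
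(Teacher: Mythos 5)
Your proof is correct, and on the $K_{2,4}$ part it takes a genuinely different---and in fact stronger---route than the paper. The paper does not show that all 4-cycles of $\Gamma_1$ are trivial: it only deduces from \cref{lemma:z-n-C-3-C-4-free} that any nontrivial 4-cycle must use at least two generators from $\{\pm 1\}$, and then asserts that this rules out a $K_{2,4}$, leaving the combinatorial step implicit. That step is delicate: the stated facts alone do not obviously exclude the configuration in which each of the four common neighbours of the two degree-$4$ vertices is joined to them by exactly one $\pm 1$-edge and one $S$-edge (every 4-cycle there has exactly two $\pm1$-edges), and excluding that configuration again reduces to showing that two elements of $S$ cannot differ by exactly $2$. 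Your proposal attacks precisely that point: the mod-$p$ computation ruling out $s_a - s_b = \pm 2$ (and $\pm 1$) is the one ingredient the paper never writes down, and with it you conclude that \emph{every} 4-cycle is trivial, hence $K_{2,3}$-freeness via \cref{obs:4-cycles-K-23-free}, which is strictly stronger than the claimed $K_{2,4}$-freeness (and would let one apply \cref{lemma:K2t-free} with $t=3$ directly, consistent with the bound $|S_1|/3$ the paper actually uses afterwards). The $C_3$ part is essentially the paper's argument, except that you supply the proof of $|s_a - s_b| \neq 1$ where the paper only says it is ``easy to see''. Two cosmetic points, neither affecting correctness: your case list for 4-cycles omits the case where all four generators lie in $\{\pm 1\}$ (the only zero-sum there is $1+1-1-1$, which is trivial), and \cref{lemma:z-n-C-3-C-4-free} \cref{item:3cycle} only gives $|s_1+s_2+s_3| \geq 2$, so it rules out a three-element $S$-sum equal to $\pm 1$ but not $\pm 3$; this is harmless because peeling a single $\pm 1$ off a 4-cycle can only leave a three-element sum equal to $\mp 1$.
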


\begin{proof}
By definition, $\Gamma_1$ contains a $C_3$ if and only if there are three elements in $S_1$ whose sum is $0$ in $\Z_n$.
It follows from \cref{lemma:z-n-C-3-C-4-free} that he sum of any 3 elements in $S$ is between $2$ and $6p^2$, and hence cannot be $0$ in $\Z_n$.
It is also easy to see there are no $s_1,s_2 \in S$ such that $|s_1-s_2| = 1$, and hence, $\Gamma_1$ in $C_3$-free.

Next we show that $\Gamma_1$ is $K_{2,4}$-free.
Recall that a 4-cycle in $\Gamma_1$ is a collection of four edges corresponding to four elements $a,b,c,d \in S_1$ such that $ab+c+d=0$.
Also, recall that a 4-cycle is called ``trivial'' if the sum is of the form $s+s'+(-s)+(-s') = 0$.

Note that if $s_1+s_2+s_3+s_4 \equiv 0 \bmod n$, then $s_1+s_2+s_3+s_4 = 0$ as an integer,
because $|s| < 2p^2$ for all $s \in S_1$ and $n \geq 8p^2$.
Therefore, by \cref{lemma:z-n-C-3-C-4-free} \cref{item:4cycle} any nontrivial 4-cycle in $\Gamma_1$ must contain an edge $(d,d+s)$
such that $s \in \{-1,1\}$. Furthermore, by \cref{lemma:z-n-C-3-C-4-free} \cref{item:3cycle} it follows that
any nontrivial 4-cycle in $\Gamma_1$ must contain at least two such edges.
This implies that $\Gamma_1$ is $K_{2,4}$-free.
\end{proof}
By applying \cref{lemma:K2t-free}, we get $c(\Gamma_1)\geq |S_1|/3= \frac{p}{3} \geq \frac{\sqrt{n}}{3\sqrt{8}} - O(n^{0.2625})$, as required.

%%%%%%%%%%%%%%%%%%%%%%%%%%%%%%%%%%%%%%%%%%%%%
\subsection{Proof of Theorem \ref{thm:sqrt-lb} \cref{item:Cayley-p^k-even-k}}
%%%%%%%%%%%%%%%%%%%%%%%%%%%%%%%%%%%%%%%%%%%%%

For the proof we consider the finite field $\GF(p^k)$, and treat $\Z_p^{k}$ as the additive group of $\GF(p^k)$.
Let $q = p^{k/2}$. Recall that $p$ is an odd prime power and $k$ is even, and hence $q$ is an odd prime power.
Define the set of generators to be
\begin{equation*}
    S_2 = \{s\in \GF(p^k) : s^{q+1}=1\}
    \enspace,
\end{equation*}
where the power $s^{q+1}$ is in the field $\GF(q^2)$.
Note that since $q$ is odd, $S_2$ is, indeed, symmetric as for all $s \in S_2$ we have $(-s)^{q+1}=(-1)^{q+1} \cdot s^{q+1} = 1$, and hence $-s \in S_2$.
Also note that $|S_2|=q+1$, since the multiplicative group of $\GF(p^k)$ is a cyclic group of order $p^k-1 = q^2-1$, and hence contains a generating element $\alpha$ of order $q^2-1 = (q+1)(q-1)$. Therefore $S_2 = \{\alpha^{(q-1) i} : i \in \{0,1,2,\dots,q \}\}$.

\begin{claim}\label{claim:G-2-K-23-free}
    The graph $\Gamma_2 = \Cayley(G_2, S_2)$ is $K_{2,3}$-free.
    In particular, for all $a_1,b_1,a_2,b_2 \in S_2$ such that $a_1 \neq -b_1$, $a_2 \neq -b_2$, and $\{a_1,b_1\} \neq \{a_2, b_2\}$
    it holds that $a_1+b_1 \neq a_2+b_2$.
\end{claim}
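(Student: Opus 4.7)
The plan is to invoke Observation \ref{obs:4-cycles-K-23-free} and show that $\Gamma_2$ has no non-trivial $4$-cycles, which is equivalent to the second assertion of the claim: whenever $a_1+b_1 = a_2+b_2$ for $a_i, b_i \in S_2$ with $a_i \neq -b_i$, the unordered pairs coincide. The key algebraic input is that $S_2$ is the kernel of the norm map $N_{\GF(q^2)/\GF(q)}$, equivalently the group of $(q+1)$-th roots of unity in $\GF(q^2)^\times$. In particular, every $s \in S_2$ satisfies $s^{q+1} = 1$, so $s^q = s^{-1}$.

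First I would fix $a_1,b_1,a_2,b_2 \in S_2$ with $a_1 \neq -b_1$, $a_2 \neq -b_2$, and assume $a_1+b_1 = a_2+b_2$; let $\sigma$ denote this common sum, which is nonzero by the first two assumptions. Next I would apply the Frobenius automorphism $\varphi(x) = x^q$ of $\GF(q^2)$ (which is $\GF(p)$-linear and in particular additive) to the equation $a_1+b_1 = a_2+b_2$. Using $s^q = s^{-1}$ for $s \in S_2$, this yields
\[
a_1^{-1} + b_1^{-1} \;=\; a_2^{-1} + b_2^{-1},
\]
equivalently $\tfrac{a_1+b_1}{a_1 b_1} = \tfrac{a_2+b_2}{a_2 b_2}$. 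Since the common numerator $\sigma$ is nonzero, I conclude $a_1 b_1 = a_2 b_2$.

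With both the sum and the product of $(a_1,b_1)$ and $(a_2,b_2)$ agreeing, the pairs are the two roots (counted with multiplicity) of the same monic quadratic $x^2 - \sigma x + a_1 b_1 \in \GF(q^2)[x]$, so $\{a_1,b_1\} = \{a_2,b_2\}$ as multisets. This establishes the second (auxiliary) assertion of the claim, which says $\Gamma_2$ has no non-trivial $4$-cycles in the sense of Observation \ref{obs:4-cycles-K-23-free}; that observation then upgrades the conclusion to $K_{2,3}$-freeness.

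The only step requiring care is verifying the identity $s^q = s^{-1}$ for $s \in S_2$ and justifying that Frobenius respects addition. Both are standard: the identity is immediate from the defining relation $s^{q+1}=1$, and additivity of $\varphi$ is a basic property of fields of characteristic $p$ (the binomial coefficients $\binom{p}{i}$ vanish for $0<i<p$), applied iteratively to reach exponent $q = p^{k/2}$. I do not anticipate any hidden obstacle beyond being careful that $\sigma \neq 0$ before dividing, which is exactly what the hypotheses $a_i \neq -b_i$ provide.
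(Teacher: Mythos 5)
Your proof is correct, but it takes a genuinely different route from the paper. The paper argues directly about common neighborhoods: for two distinct vertices $d_1,d_2$ it writes the common-neighbor condition as the system $(x-d_i)(x^q-d_i^q)=1$, $i=1,2$, and cites Theorem~3 of Koll\'ar--R\'onyai--Szab\'o to conclude this system has at most $2!=2$ solutions, which gives $K_{2,3}$-freeness immediately; the ``in particular'' part is then derived from $K_{2,3}$-freeness. You go in the opposite direction: you prove the ``in particular'' part first by an elementary, self-contained computation --- applying the Frobenius $x\mapsto x^q$ to $a_1+b_1=a_2+b_2$ and using $s^q=s^{-1}$ on the norm-one subgroup to get $a_1b_1=a_2b_2$, then invoking Vieta to conclude $\{a_1,b_1\}=\{a_2,b_2\}$ --- and you then pass through \cref{obs:4-cycles-K-23-free} to upgrade ``no non-trivial $4$-cycles'' to $K_{2,3}$-freeness. (Your reduction is sound: a $4$-cycle $a+b+c+d=0$ with $a+b\neq 0$ rewrites as $a+b=(-c)+(-d)$ with $-c,-d\in S_2$ by symmetry, and the degenerate case $a=-b$ forces $c=-d$, hence a trivial cycle.) What each approach buys: yours avoids any external black box and is essentially the unrolled $t=2$ case of the KRS argument, which makes the claim verifiable in a few lines; the paper's citation-based route is shorter on the page and would generalize to $K_{t,t!+1}$-freeness of the analogous norm graphs for larger $t$, which your quadratic/Vieta step would not. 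Both proofs establish the full statement, including the ``in particular'' clause that the paper later reuses to show $S_2$ generates the group.
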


\begin{proof}
If $d_1, d_2$ are distinct elements of $\GF(q^2)$, then the number of vertices in $\Gamma_2$ adjacent to both $d_1$, and $d_2$ is equal to the number of solutions of the below system of equations.
\begin{flalign*}
    (x-d_1)^{q+1} &= 1 \\
    (x-d_2)^{q+1} &= 1 \enspace,
\end{flalign*}
or equivalently
\begin{flalign*}
    (x-d_1) (x^q-d_1^q) &= 1 \\
    (x-d_2) (x^q-d_2^q) &= 1 \enspace.
\end{flalign*}
This is a special case of system of equations (4) in \cite{KollarRS96} ($K=\GF(p^k), t=2, a_{ij}=d_j^{q^{i-1}}, x_i=x^{q^{i-1}}, b_j=1$).
Thus, according to Theorem 3 in \cite{KollarRS96}, the system of equations has at most $t!=2$ solutions.
Therefore, the Cayley graph $\Cayley(G_2, S_2)$ is $K_{2,3}$-free.

For the ``in particular'' part, note that if we had two distinct pairs $\{a_1,b_1\}$ and $\{a_2, b_2\}$ with $a_1 \neq -b_1$ and $a_2 \neq -b_2$ such that $a_1+b_1 = a_2+b_2$,
then we would get a copy of $K_{2,3}$ in $\Gamma_2$ with the vertices $\{d_1 = 0, d_2 = a_1+b_1\}$ on one side and $\{a_1,b_1,a_2\}$ on the other side.
\end{proof}

Finally, observe that $S_2$ is a generating set for $\Z_p^k$.
Indeed, by the ``in particular'' part of \cref{claim:G-2-K-23-free} the set $S_2$ spans at least $\binom{|S_2|}{2}=\binom{q+1}{2} > q^2/2$ elements of $G$,
as for any pair $a,b \in S_2$ with $a \neq -b$ produces a different sum in $G_2$.
Since the number of elements spanned by $S_2$ divides $q^2$, it must be the case that $S_2$ generates the entire group $\Z_p^k$, and hence $\Cayley(\Z_p^k, S_2)$ is connected.

Using \cref{lemma:K2t-free}, we conclude that $c(\Gamma_2) \geq |S_2|/3=(q+1)/3 > \sqrt{n}/3$, as required.

%%%%%%%%%%%%%%%%%%%%%%%%%%%%%%%%%%%%%%%%%%%%%
\subsection{Proof of Theorem \ref{thm:sqrt-lb} \cref{item:Cayley-5-p^2}}
%%%%%%%%%%%%%%%%%%%%%%%%%%%%%%%%%%%%%%%%%%%%%
Consider the abelian group $G_3 = \Z_5 \times \Z_p \times \Z_p$ of order $n = 5p^2$.
Define the set of generators $S_3 = \{(1,a,a^2): a \in \Z_p\} \cup \{(-1,-a,-a^2): a \in \Z_p\}$, where $a^2$ is taken modulo $p$.
Note that $S_3$ is indeed a symmetric set of size $|S_3| = 2p$.

Let $\Gamma_3 = \Cayley(G_3, S_3)$ be the corresponding Cayley graph. We show below that $\Gamma_3$ is $\{C_3, K_{2,3}\}$-free,
and hence by \cref{lemma:K23-triangle-free} we conclude that $c(\Gamma_3) \geq |S_3|/2 =p$, as required.

\begin{claim}\label{claim:G-3-C-3-K-23-free}
    The graph $\Gamma_3$ is connected and $\{C_3, K_{2,3}\}$-free.
\end{claim}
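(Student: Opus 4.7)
The plan is to verify the three assertions of \cref{claim:G-3-C-3-K-23-free}---triangle-freeness, $K_{2,3}$-freeness, and connectedness---in that order. The key structural fact throughout is that every generator has the form $\pm(1,a,a^2)$, so the first coordinate records a signed count modulo $5$ while the last two coordinates impose a first-moment and second-moment condition modulo $p$. The $C_3$ case is immediate from the first coordinate alone: any sum of three elements of $S_3$ has first coordinate equal to a sum of three $\pm 1$'s, hence lies in $\{\pm 1, \pm 3\}$, which is nonzero in $\Z_5$. Hence no three generators sum to $0$ in $G_3$, and $\Gamma_3$ has no triangle.

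For $K_{2,3}$-freeness I would invoke \cref{obs:4-cycles-K-23-free} and show that $\Gamma_3$ contains no non-trivial 4-cycle. Suppose $s_1+s_2+s_3+s_4 = 0$ with each $s_i \in S_3$; the first coordinate forces exactly two of the signs to be $+$ and two to be $-$, so after relabeling the four generators can be written as $(1,a,a^2),(1,b,b^2),-(1,c,c^2),-(1,d,d^2)$. Vanishing of the remaining coordinates then yields
\begin{align*}
a + b &\equiv c + d \pmod{p}, \\
a^2 + b^2 &\equiv c^2 + d^2 \pmod{p}.
\end{align*}
\cref{lemma:equal-sums}, which applies because $p$ is an odd prime, then forces $\{a,b\} \equiv \{c,d\} \pmod{p}$, which is precisely the statement that the four generators pair up as $\{s,s',-s,-s'\}$. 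Thus every 4-cycle is trivial and $\Gamma_3$ is $K_{2,3}$-free by \cref{obs:4-cycles-K-23-free}.

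For connectedness I would show $\langle S_3 \rangle = G_3$. The element $(1,0,0) \in S_3$ already generates the $\Z_5$ factor. For the $\Z_p\times\Z_p$ factor, the differences $(1,1,1)-(1,0,0)=(0,1,1)$ and $(1,2,4)-(1,0,0)=(0,2,4)$ both lie in the generated subgroup; viewing them as vectors in $\Z_p^2$, the determinant $\bigl|\begin{smallmatrix}1&1\\2&4\end{smallmatrix}\bigr| = 2$ is invertible modulo any odd prime, so they span $\Z_p\times\Z_p$ as a $\Z_p$-vector space. Combined with the $\Z_5$ factor this gives the whole of $G_3$. The only real obstacle in the whole argument is the $K_{2,3}$ step, which crucially depends on \cref{lemma:equal-sums}; once that lemma is in hand, everything else reduces to an inspection of the first coordinate modulo $5$ or a $2\times 2$ determinant computation modulo $p$.
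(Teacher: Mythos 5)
Your argument is correct, and for the two substantive parts---triangle-freeness via the first coordinate and $K_{2,3}$-freeness via non-trivial 4-cycles, \cref{lemma:equal-sums}, and \cref{obs:4-cycles-K-23-free}---it coincides with the paper's proof step for step. The one place you genuinely diverge is connectedness. The paper argues indirectly: $\sp(S_3)$ is a subgroup, so its order divides $5p^2$, and since there are no non-trivial 4-cycles the pairwise sums $s+s'$ are distinct, giving $|\sp(S_3)| \geq \binom{2p}{2} = p(2p-1) > p^2$, which forces $\sp(S_3) = G_3$. You instead exhibit explicit generators: $(1,0,0)$ handles the $\Z_5$ factor, and the differences $(0,1,1)$ and $(0,2,4)$ span $\Z_p \times \Z_p$ because the determinant $2$ is invertible for odd $p$. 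Your version is more elementary and self-contained (it does not reuse the 4-cycle structure), and it pins down the generation concretely; the paper's counting argument is the one that generalizes to settings where no explicit basis is at hand, and is the same device it reuses for $\Gamma_2$. Either is a complete proof of the claim.
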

\begin{proof}
Observe that $\Gamma_3$ has no triangles because there are no three elements in $S$ whose sum is 0 in the first coordinate.

Next we claim that $\Gamma_3$ is $K_{2,3}$-free.
This is done by proving that $\Gamma_3$ contains no non-trivial 4-cycles.
Indeed, let $s_1,s_2,s_3,s_4 \in S_3$ be four generators such that $s_1+s_2+s_3+s_4 = 0$ in $G_3$,
By looking at the first coordinate (to $\Z_5$), it must be the case that two of the $s_i$'s are in $\{(1,a,a^2): a \in \Z_p\}$
and two are in $\{(-1,-a,-a^2): a \in \Z_p\}$.
Assume without loss of generality that $s_1 = (1,a,a^2)$, $s_2 = (1,b,b^2)$, $s_3 = (-1,-c,-c^2)$, $s_4 = (-1,-d,-d^2)$ for some $a,b,c,d \in \Z_p$.
Therefore, if $s_1+s_2+s_3+s_4 = 0$, then $a+b \equiv c+d \bmod p$ and $a^2+b^2 \equiv c^2+d^2 \bmod p$.
Therefore, by \cref{lemma:equal-sums} we either have ($a = c$ and $b = d$)  or ($a = d$ and $b = c$).
Therefore, $\Gamma_3$ contains only trivial 4-cycles, as required.
Therefore, by \cref{obs:4-cycles-K-23-free} the Cayley graph $\Gamma_3$ is $K_{2,3}$-free.

In order to see that $\Gamma_3$ is connected, note that the elements spanned by $S_3$ form a subgroup of $G_3$, and hence $5p^2$ is divisible by $|\sp(S_3)|$.
Since $\Gamma_3$ contains no non-trivial 4-cycles, it follows that the number of elements spanned by $S_3$ is $|\sp(S_3)| \geq |\{s+s' :s,s' \in S_3, s'\neq s\}| \geq \binom{|S_3|}{2} \geq \binom{2p}{2} = p(2p-1)$, and hence $S_3$ spans the entire group $G_3$.
\end{proof}

By \cref{lemma:K23-triangle-free} the cop number of $\Gamma_3$ is $c(\Gamma_3) \geq (|S_3|+1)/2 \geq (2p+1)/2$.
On the other hand, according to \cite[Theorem 1]{Frankl1987} we have $c(\Gamma_3)\leq \ceil{(|S_3|+1)/2} = \ceil{(2p+1)/2} = p+1$.
Therefore, $c(\Gamma_3)=p+1$.

%%%%%%%%%%%%%%%%%%%%%%%%%%%%%%%%%%%%%%%%%%%%%
\subsection{Proof of \cref{thm:root3-of-n-lb-for-all-cayley-graphs}}
%%%%%%%%%%%%%%%%%%%%%%%%%%%%%%%%%%%%%%%%%%%%%

Let $G$ be an abelian group of order $n$ such that $G$ has no elements of order $2$ or $3$.
We construct a generating set $S \seq G$ using \cref{alg:greedy-s}.
Before describing the algorithm we make the following notation.
\begin{notation}
    For a subset $S \seq G$ let
    $\Forbidden_1(S) = \{a+b+c : a,b,c\in S\}$, $\Forbidden_2(S) = \{ a : \exists b,c \in S \text{ s.t. } b+c+a+a=0\}$, and $\Forbidden_3(S) = \{a : a+a+a \in S \}$.
    Define $\Forbidden_S = \Forbidden_1(S) \cup \Forbidden_2(S) \cup \Forbidden_3(S)$.
\end{notation}

\begin{claim}\label{claim:arbitrary-g-K23-free}
    Let $S \seq G$ be a symmetric set,
    and suppose that $S$ has no non-trivial 4-cycles.
    Then, for any $s^* \in G \setminus \Forbidden_S$
    the set $S \cup \{s^*,-s^*\}$ has no non-trivial 4-cycles.
\end{claim}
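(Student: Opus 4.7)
The plan is to argue by contradiction. Assume that $S' = S \cup \{s^*, -s^*\}$ admits a non-trivial 4-cycle, i.e., four (possibly repeated) generators $s_1, \dots, s_4 \in S'$ summing to $0$ whose multiset is not of the form $\{a, -a, b, -b\}$. Since $S$ has no non-trivial 4-cycles by hypothesis, at least one $s_i$ lies in $\{s^*, -s^*\}$. Letting $p$ and $q$ count the occurrences of $s^*$ and $-s^*$ respectively, I would stratify by $k = p + q \in \{1, 2, 3, 4\}$ and show that each configuration either forces $s^* \in \Forbidden_S$ (contradicting the hypothesis) or reveals the cycle to be trivial after all.

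Before the cases I would record two easy preliminary facts. First, each $\Forbidden_i(S)$ is symmetric under negation, because $S$ is symmetric: negating every element of a witness for $a$ produces a witness for $-a$, so $\Forbidden_S$ is symmetric. Second, $S \seq \Forbidden_2(S)$, since for any $a \in S$ the identity $(-a) + (-a) + a + a = 0$ is a valid $\Forbidden_2$-witness; in particular, the hypothesis $s^* \notin \Forbidden_S$ already forces $s^* \notin S$.

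The case analysis is then a direct unpacking of the definitions. For $k = 1$ the sum rearranges to $\pm s^* = -(t_1+t_2+t_3)$ with $t_i \in S$, so $s^* \in \Forbidden_1(S)$ by symmetry of $S$. For $k = 2$ the pure splits $(2,0)$ and $(0,2)$ give $\pm 2s^* + t_1 + t_2 = 0$, placing $s^* \in \Forbidden_2(S)$; the mixed split $(1,1)$ forces $t_1 + t_2 = 0$ and hence exhibits the trivial multiset $\{s^*, -s^*, t, -t\}$. For $k = 3$ the pure splits $(3,0)$ and $(0,3)$ yield $\pm 3 s^* \in S$, so $s^* \in \Forbidden_3(S)$; the mixed splits $(2,1)$ and $(1,2)$ give $\pm s^* \in S$, contradicting $s^* \notin S$. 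Finally, for $k = 4$ the split $(2,2)$ is already the trivial cycle $\{s^*, s^*, -s^*, -s^*\}$, while the other splits force either $2 s^* = 0$ or $4 s^* = 0$; since $G$ has no element of order $2$, $|G|$ is odd, so these equations force $s^* = 0$, and then $0 \in \Forbidden_2(S)$ (witness $b + (-b) + 0 + 0 = 0$ for any $b \in S$), once again contradicting $s^* \notin \Forbidden_S$.

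The main obstacle is not any deep calculation but the careful bookkeeping of which configurations correspond to genuinely new obstructions versus which already describe trivial cycles — in particular the $(1,1)$ and $(2,2)$ sub-cases, where one must notice that the cancellation of $s^*$ with $-s^*$ really does produce a multiset of the trivial form. Beyond that, the three sets $\Forbidden_1, \Forbidden_2, \Forbidden_3$ are engineered precisely so that every remaining non-trivial configuration lands in one of them.
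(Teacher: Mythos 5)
Your proof is correct and takes essentially the same route as the paper's: a case analysis on the number of occurrences of $\pm s^*$ in a putative non-trivial 4-cycle, routing each configuration into $\Forbidden_1(S)$, $\Forbidden_2(S)$, $\Forbidden_3(S)$, triviality, or the no-order-2 hypothesis (the paper shows $s^*\notin S$ via $S\seq\Forbidden_1(S)$ rather than your $S\seq\Forbidden_2(S)$, an immaterial difference). If anything, your bookkeeping of the mixed-sign sub-cases for $k=3$ and $k=4$ is more explicit than the paper's ``without loss of generality'' treatment.
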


\begin{proof}
Observe first that $S \seq \Forbidden_1(S)$, as for any $s \in S$ we have $s = s+s+(-s) \in \Forbidden_1(S)$.
In particular $S \seq \Forbidden_S$, and thus if $S \cup \{s^*,-s^*\}$ contains a non-trivial 4-cycle $a+b+c+d = 0$,
then at least one of the elements must be in $\{s^*,-s^*\}$.

Note that for any three elements $a,b,c \in S$ we have $a+b+c$ in $\Forbidden_1(S) \seq \Forbidden_S$,
and $s^*, -s^* \notin \Forbidden_S$. Therefore $S \cup \{s^*,-s^*\}$ does not contains a non-trivial 4-cycle with exactly one element in $\{s^*,-s^*\}$.

Suppose now that two of the elements $\{a,b,c,d\}$ are in $\{s^*,-s^*\}$. Since the 4-cycle is non-trivial,
it must be that the two of the elements are equal. Without loss of generality suppose that $a=b=s^*$.
But then $s^* \in \Forbidden_2(S)$, and hence $a+b+c+d=0$ cannot be a non-trivial 4-cycle with two edges outside $S$.

Similarly, if three of the elements $a,b,c,d$ belong to $\{s^*,-s^*\}$, we may assume without loss of generality that $a=b=c=s^*$.
But this implies that $s^* \in \Forbidden_3(S)$, and hence  $a+b+c+d=0$ cannot be a non-trivial 4-cycle with three edges outside $S$.

Finally, since $G$ does not contain elements of order 2, it is impossible that all four elements $a,b,c,d$ belong to $\{s^*,-s^*\}$.

This completes the proof of \cref{claim:arbitrary-g-K23-free}
\end{proof}

We are now ready to describe the algorithm.

\begin{algorithm}
\caption{Constructing a generating set $S$ of a group $G$}\label{alg:greedy-s}
\begin{algorithmic}
    \State $S_0 \gets$ a minimal generating set of $G$
    \State $S \gets S_0 \cup -S_0$
    \While {$G \neq \Forbidden_S$}
        \State Choose  an arbitrary element $s \in G\setminus \Forbidden_S$
        \State $S \gets S \cup \{-s,s\}$
    \EndWhile
    \State \Return $S$
\end{algorithmic}
\end{algorithm}

For the analysis observe first that in the end of each iteration we have $|\Forbidden_S| \leq |S|^3 + |S|^2 + |S|$.
Indeed, we have
\begin{inparaenum}[(i)]
\item $|\Forbidden_1| \leq |S|^3 = k^3$,
\item $|\Forbidden_2| \leq |S|^2 = k^2$, as $G$ has no elements of order $2$, and
\item $|\Forbidden_3| \leq |S| = k$, as $G$ has no elements of order $3$.
\end{inparaenum}
Therefore, since the algorithm ends when $|\Forbidden_S| = n$, it follow that the output is a set $S$ of size $\Omega(n^{1/3})$.

Note first that since $S$ contains a generating set of $G$, the graph $\Gamma = \Cayley(G,S)$ is connected.
Also, note that since $S_0$ is a minimal generating set of $G$, the set $S$ before the loop contains no non-trivial 4-cycles.
Indeed, it is not difficult to see that if $G$ contains no elements of order 2, and $S_0 \cup -S_0$ contains a non-trivial four cycle $a+b+c+d = 0$,
then $S_0$ contains a strict subset generating $G$.

By \cref{claim:arbitrary-g-K23-free} in each iteration of the algorithm, $S$ does not contain a non-trivial 4-cycles in any iteration,
and hence, by \cref{obs:4-cycles-K-23-free} in the end of the algorithm the graph $\Gamma = \Cayley(G,S)$ is $K_{2,3}$-free.
Therefore, by \cref{lemma:K2t-free} $c(\Gamma) \geq |S|/3 \geq \Omega(n^{1/3})$, as required.

%%%%%%%%%%%%%%%%%%%%%%%%%%%%%%%%%%%%%%%%%%%%%
\section{Final Remarks and Open Problems}
%%%%%%%%%%%%%%%%%%%%%%%%%%%%%%%%%%%%%%%%%%%%%
We showed in \cref{thm:sqrt-lb} several classes of Meyniel extremal Cayley graphs.
Our \cref{thm:root3-of-n-lb-for-all-cayley-graphs} shows a weaker result for general groups, namely, that any group satisfying certain mild conditions has a Cayley graph of order $\Omega(n^{1/3})$.
This raises the following natural question.

\begin{question}
    Is it true that any group $G$ has a Cayley graph that is Meyniel extremal?
\end{question}

Also, Pralat~\cite{Pralat10} showed a family of graphs on $n$ vertices whose cop number $\geq \sqrt{n/2} \cdot (1-o(1))$.
It would be interesting to find a family of Cayley graphs matching with the same parameters.

\bibliographystyle{plain}
\bibliography{mybib}

\end{document}